\theoremstyle{definition}
\newtheorem{definition}{Definition}[section]
\newtheorem{theorem}[definition]{Theorem}
\newtheorem{lemma}[definition]{Lemma}
\title{An algorithm for counting number of all (normal) fuzzy subgroups in $U_{6n}$}
\author{Marek Hy\v{c}ko\footnote{Email: marek.hycko@mat.savba.sk.\\ The research was supported by VEGA 2/0142/20 and APVV-20-0069 grants.}\\
Mathematical Institute Slovak Academy of Sciences,\\
\v{S}tef\'{a}nikova 49, SK-814 73 Bratislava}
\date{}
\begin{document}
\maketitle
\begin{abstract}
    Using already known resuls concerning the structure of (normal) subgroups of a $U_{6n}$ group we provide a dynamical programming algoritm for counting the number of all (normal) fuzzy subgroups of $U_{6n}$ with respect to M. T\u{a}rn\u{a}ceanu and L. Bentea equivalence relation.
\end{abstract}
\section{Preliminaries}
In this section we recall some notions which will be used in the paper. For the full coverage please refer to references.

Fuzzy groups were introduced by A. Rosenfeld (\cite{Rosen2001}).

\begin{definition}
Let $(G, e)$ be a group (with multiplicative operation) and $e$ be its neutral element. A fuzzy subset $\mu\in [0,1]^G$ of $G$ is called a \emph{fuzzy subgroup} (of $G$), if the following conditions are satisfied:
\begin{itemize}
	\item[(FG1)] $\mu(xy)\geq \min\{\mu(x), \mu(y)\}$,
	\item[(FG2)] $\mu(x^{-1})\geq \mu(x)$.
\end{itemize}
We call a fuzzy subgroup of $G$ \emph{normal}, if $\mu(xy) = \mu(yx)$, for any $x,y\in G$.
\end{definition}


The number of all (normal) fuzzy subgroups even for a trivial one-element group is infinite (since fuzzy levels are from the unit interval $[0,1]$). For counting fuzzy subgroups we will count the number of equivalence classes of the equivalence relation $\sim$ used by M. T\u{a}rn\u{a}uceanu and L. Bentea in \cite{TarBen2008}, which is for fuzzy subgroups $\mu$, $\nu$ of $G$ defined as follows:
\[ \mu \sim \nu \mbox{ if and only if } (\mu(x) > \mu (y) \Leftrightarrow \nu(x) > \nu(y), \mbox{ for all } x, y\in G).\]

It is worth to note that in history another kind of relation was used. Namely, by V. Murali and B. B. Makamba (\cite{MuMa2001-1}), which contained also condition that $\mu$ and $\nu$ have the equal support as fuzzy sets (moreover they considered only such fuzzy subgroups, where $\mu(e) = 1$). If we denote such relation as $\sim_2$ then \[
	\begin{split}
		|\{[\nu]_{\sim_2}: \nu \mbox{ is a fuzzy subgroup of } G\}| &= \\
		&2 |\{[\nu]_{\sim}: \nu \mbox{ is a fuzzy subgroup of } G\}| - 1.
	\end{split}
\] (By \cite{Volf2004} fuzzy subgroups are characterized as chains of subgroups ending in $G$, which are mapped to sequences of membership-degree levels $1> t_1>\ldots> t_k\geq 0$. Relation $\sim_1$ distinguishes just relations between $t_i$'s, but $\sim_2$ distinguishes whether the last element $t_k$ if present is non-zero or zero, thus each non-trivial chain ending in $G$ gives rise to two classes with respect to $\sim_2$ relation.)

We will consider groups $U_{6n}$ that are for $n\in\mathbb{N}$ groups with presentation
\[
   U_{6n} = \langle a, b \mathrel{|} a^{2n} = b^3 = 1, bab = a\rangle.
\]

The structure of them were studied in \cite{SheAsh2019}. We summarize main findings, which are relevant to this study. Each element is a sequence of letters $a$ and $b$, which can be represented in a canonical way, i.e., $a^s b^t$, where $s = 0, 1, 2, \ldots 2n - 1$ and $t = 0, 1, 2$. (This follows from the fact that it holds $ba = ab^2$ and $b^2a = ab$.) We note, that this representation is equivalent to $a^s b^t$, for $t = 1, 2, \ldots, 2n$, since $a^0 = a^{2n} = 1$. Throughout the paper we will use both such representations and it should be clear from the context, which one is used. 

Moreover, for powers of canonical elements it holds:

$(a^s b^t)^k = a^{sk \mod 2n}$, if $t = 0$ 

$(a^s b^t)^k =a^{sk \mod 2n}b^{tk\mod 3}$, if $s$ is even

$(a^s b^t)^k = a^{sk \mod 2n}$, if $t \neq 0$ and $s$ is odd and $k$ is even

$(a^s b^t)^k =a^{sk \mod 2n}b^t$, if $t \neq 0$ and $s$ is odd and $k$ is odd

Each subgroup of $U_{6n}$ is of the form $\langle a^r, b\rangle$, or $\langle a^r\rangle$, or $\langle a^r b\rangle$, or $\langle a^r b^2\rangle$, for $r\mathrel{|}2n$.

\begin{lemma}\label{lem:subg}  
Let $G_1$ and $G_2$ be subgroups of $U_{6n}$. Then the following holds 

\begin{enumerate}

\item $\langle a^t, b\rangle$ is not subgroup of $\langle a^{t'}\rangle$ for any $t, t'\in \mathbb{N}$, $t, t' | 2n$.

\item $t_1, t_2 | 2n$ and if $t_1 \nmid t_2$, then $\langle a^{t_2}\rangle$ is not subgroup of $\langle a^{t_1}\rangle$ and 
$\langle a^{t_2}, b\rangle$ is not subgroup of $\langle a^{t_1}, b\rangle$.

\item $t_1, t_2 | 2n$ and if $t_1 | t_2$ and for $r:= t_2 / t_1$,  $(a^{u_1}b^{v_1})^r \neq a^{u_2}b^{v2}$, then $\langle a^{t_2}b^s\rangle$ is not subgroup of $\langle a^{t_1}b^s\rangle$, for $s =0,1,2$.
\end{enumerate}
\end{lemma}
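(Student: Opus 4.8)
The plan is to treat the three items separately, relying on two elementary facts about $U_{6n}$: the uniqueness of the canonical form $a^sb^t$ with $0\le s<2n$, $0\le t<3$, and the ``$a$-exponent projection'' $\psi\colon U_{6n}\to\mathbb{Z}_{2n}$, $\psi(a^sb^t)=s\bmod 2n$. That $\psi$ is a homomorphism is immediate from the rewriting rules $ba=ab^2$, $b^2a=ab$, since pushing every $b$ to the right never changes the total $a$-exponent; moreover $\psi$ is injective on $\langle a\rangle$, and $\psi(\langle a^t\rangle)=\psi(\langle a^t,b\rangle)=t\mathbb{Z}_{2n}$, the subgroup of $\mathbb{Z}_{2n}$ of order $2n/t$.

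For item~1, note $b\in\langle a^t,b\rangle$, whereas every element of $\langle a^{t'}\rangle\subseteq\langle a\rangle$ has canonical form with $b$-exponent $0$, while $b$ has $b$-exponent $1$; by uniqueness of the canonical form $b\notin\langle a^{t'}\rangle$, hence $\langle a^t,b\rangle\not\subseteq\langle a^{t'}\rangle$. For item~2, assume $\langle a^{t_2}\rangle\subseteq\langle a^{t_1}\rangle$ (respectively $\langle a^{t_2},b\rangle\subseteq\langle a^{t_1},b\rangle$) and apply $\psi$: one gets $t_2\mathbb{Z}_{2n}\subseteq t_1\mathbb{Z}_{2n}$, and since $t_1\mid 2n$ this forces $t_1\mid t_2$; the contrapositive is exactly the claim. (The first inclusion can also be checked directly inside the cyclic group $\langle a\rangle$, where $a^{t_2}\in\langle a^{t_1}\rangle$ iff $t_1\mid t_2$.)

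For item~3, since $\langle a^{t_2}b^s\rangle$ is cyclic it suffices to show its generator $a^{t_2}b^s$ does not lie in $\langle a^{t_1}b^s\rangle$, i.e.\ that $a^{t_2}b^s\neq(a^{t_1}b^s)^k$ for every $k$. Reading the $a$-exponent of $(a^{t_1}b^s)^k$ off the four power formulas --- it is always $t_1k\bmod 2n$ --- a putative solution must satisfy $t_1k\equiv t_1r\pmod{2n}$, that is $k\equiv r\pmod{2n/t_1}$. One then shows that no such $k$ can also produce the required $b$-exponent $s$. For $s=0$ there is nothing to prove, since $k=r$ already works and the hypothesis $(a^{t_1}b^s)^r\neq a^{t_2}b^s$ cannot hold. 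For $s\neq0$ I would split on the parity of $t_1$. If $t_1$ is odd then $2n/t_1$ is even and the power formulas give $b$-exponent $s$ only for odd $k$, so $k\equiv r\pmod{2n/t_1}$ fixes the parity of $k$, and the hypothesis (which in this case amounts to ``$r$ is even'') excludes every candidate. If $t_1$ is even the $b$-exponent of $(a^{t_1}b^s)^k$ is $sk\bmod 3$, equal to $s$ exactly when $k\equiv1\pmod 3$; one must then rule out a $k$ with $k\equiv r\pmod{2n/t_1}$ and $k\equiv1\pmod 3$, and here the hypothesis (``$r\not\equiv1\pmod 3$'') together with the arithmetic relation between $3$ and $2n/t_1$ forces these two congruences to be incompatible.

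The hard part is precisely this last case: showing that in the $t_1$-even situation the congruences $k\equiv r\pmod{2n/t_1}$ and $k\equiv1\pmod 3$ cannot be solved simultaneously. This is where the hypothesis on $(a^{t_1}b^s)^r$ and the value of $\gcd(3,2n/t_1)$ --- equivalently, the fact that $\langle a^{t_1}b^s\rangle$ is not already one of the subgroups $\langle a^{t_1},b\rangle$ --- enter in an essential way; the remaining cases are routine bookkeeping with the canonical form and the power formulas.
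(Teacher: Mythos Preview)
Your treatment of items~1 and~2 is essentially the paper's own argument: the paper simply observes that $b\notin\langle a^{t'}\rangle$ and that $a^{t_2}\in\langle a^{t_1}\rangle$ forces $t_1\mid t_2$; your homomorphism $\psi$ is a clean repackaging of the second observation, not a different idea.

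The real divergence is item~3. The paper's proof merely asserts the biconditional ``$a^{t_2}b^{s}\in\langle a^{t_1}b^{s}\rangle$ if and only if $(a^{t_1}b^{s})^{r}=a^{t_2}b^{s}$'' and stops, treating the ``only if'' direction as self-evident. You instead try to rule out \emph{every} exponent $k$, not just $k=r$, and in doing so you expose that this direction is not automatic: when $t_1$ is even and $3\nmid 2n/t_1$ the group $\langle a^{t_1}b^s\rangle$ has order $3\cdot 2n/t_1$, the congruences $k\equiv r\pmod{2n/t_1}$ and $k\equiv 1\pmod 3$ are simultaneously solvable by the Chinese Remainder Theorem, and the paper's biconditional actually fails. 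Concretely, take $n=2$, $t_1=2$, $t_2=4$, $s=1$: then $r=2$, $(a^2b)^2=b^2\neq b=a^4b$, yet $a^4b=b=(a^2b)^4\in\langle a^2b\rangle$. Your remark that one needs $\langle a^{t_1}b^s\rangle\neq\langle a^{t_1},b\rangle$, i.e.\ $t_1$ odd or $3\mid 2n/t_1$, is exactly the missing side condition; the paper gets away with omitting it only because the algorithm will compare just the \emph{distinct} canonical representatives listed in the subsequent theorem, all of which satisfy this constraint. Under that restriction your parity split completes cleanly, so your argument is more careful than the paper's and pinpoints a genuine imprecision in the lemma as stated.
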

\begin{proof}
    We consider generators in a canonical form, i.e., exponents of an element $a$ divide $2n$.
    No $\langle a^t, b\rangle$ is a subgroup of $\langle a^s\rangle$, which covers the first part. 
    
    If $\langle a^t, b\rangle$ is a subgroup of $\langle a^s, b\rangle$  (or $\langle a^t\rangle$ is a subgroup of $\langle a^s\rangle$), then $a^t\in \langle a^s\rangle$, which is possible only if $s|t$. This covers the second part. 

    An element $a^{t_1} b^{s_1}$ is an element of $\langle a^{t_2} b^{s_2}\rangle$, if and only if $t_2 | t_1$ and for $r:=t_1 / t_2$ it holds $(a^{t_2} b^{s_2})^r = a^{t_1} b^{s_1}$. Which covers the last part.
\end{proof}

\begin{theorem}\label{thm:allsubgroups}
    Let $n\in \mathbb{N}$. 
    Then the set of all subgroups of $U_{6n}$ 
    consists of the following mutually exclusive elements:
    \begin{enumerate}
        \item $\langle a^t\rangle$ and $\langle a^t, b\rangle$, for all $t\in \{1, 2, \ldots, 2n\}$, $t|2n$;
        \item $\langle a^t b\rangle$ and $\langle a^t b^2\rangle$, for all $t\in \{1, 2, \ldots, 2n\}$, $t|2n$ such that $t$ is odd or ($t$ is even and $\frac{2n}{t}\equiv 0\mod 3$).
    \end{enumerate}
\end{theorem}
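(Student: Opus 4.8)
The plan is to build on the classification quoted above from \cite{SheAsh2019} — every subgroup of $U_{6n}$ is of the form $\langle a^r\rangle$, $\langle a^r,b\rangle$, $\langle a^r b\rangle$ or $\langle a^r b^2\rangle$ with $r\mid 2n$ — and to do three things: confirm that the two families in part~1 genuinely occur for every divisor $t$ of $2n$ and are pairwise distinct; determine for which $t$ the cyclic subgroups $\langle a^t b\rangle$, $\langle a^t b^2\rangle$ are new, i.e.\ not already among the groups of part~1; and verify that the whole list has no repetitions. The tools are the power formulas for $(a^sb^t)^k$ recalled above, a short order count, and Lemma~\ref{lem:subg}.

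For part~1: using $ba^2 = a^2 b$ (a consequence of the defining relation $ba=ab^2$) one sees that $\langle a^t,b\rangle=\{a^{jt}b^k: 0\le j<\tfrac{2n}{t},\ 0\le k\le 2\}$ has order $3\cdot\tfrac{2n}{t}$, while $\langle a^t\rangle$ is cyclic of order $\tfrac{2n}{t}$. Hence the two families are visibly distinct for distinct values of $t$, and distinct from one another, either by comparing orders or directly from Lemma~\ref{lem:subg}(1)--(2).

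The core of the argument is the analysis of $H_{t,s}:=\langle a^t b^s\rangle$ for $t\mid 2n$ and $s\in\{1,2\}$ (the case $s=0$ being already in part~1). Since $a\mapsto a$, $b\mapsto b^2$ is an automorphism of $U_{6n}$ that swaps $b$ and $b^2$, it is enough to treat $s=1$ and transport the conclusions. Here I split on the parity of $t$. If $t$ is odd, then $\tfrac{2n}{t}$ is even, and the formulas give $(a^t b)^k=a^{tk\bmod 2n}$ for $k$ even and $a^{tk\bmod 2n}b$ for $k$ odd, so $(a^tb)^k=1$ forces $\tfrac{2n}{t}\mid k$ (which already forces $k$ even); thus $|H_{t,1}|=\tfrac{2n}{t}$, every element of $H_{t,1}$ with non-trivial $b$-part lies in the coset $\langle a\rangle b$, and in particular $b\notin H_{t,1}$ and $H_{t,1}\not\subseteq\langle a\rangle$. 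If $t$ is even, then $(a^tb)^k=a^{tk\bmod 2n}b^{k\bmod 3}$, so $(a^tb)^k=1$ iff $\tfrac{2n}{t}\mid k$ and $3\mid k$: when $3\nmid\tfrac{2n}{t}$ this means $3\cdot\tfrac{2n}{t}\mid k$, whence $|H_{t,1}|=3\cdot\tfrac{2n}{t}=|\langle a^t,b\rangle|$ and, since $a^tb\in\langle a^t,b\rangle$, we get $H_{t,1}=\langle a^t,b\rangle$, already in part~1 and not a new subgroup; when $3\mid\tfrac{2n}{t}$ we get $|H_{t,1}|=\tfrac{2n}{t}$ and again $b\notin H_{t,1}$, $H_{t,1}\not\subseteq\langle a\rangle$. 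This is exactly the condition ``$t$ odd, or $t$ even with $3\mid\tfrac{2n}{t}$'' recorded in part~2; applying the automorphism gives the same statement for $\langle a^tb^2\rangle$, and together with the classification this already shows every subgroup is on the list.

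It remains to rule out coincidences. Within part~1 this is Lemma~\ref{lem:subg}(1)--(2). For the new groups: in every ``new'' case $|H_{t,s}|=\tfrac{2n}{t}$, so an equality $H_{t,s}=H_{t',s'}$ forces $t=t'$; and $H_{t,1}\ne H_{t,2}$ because $a^tb^2\in H_{t,2}$ while $a^tb^2\notin H_{t,1}$ (a power $(a^tb)^k$ with $b$-part $b^2$ is impossible in the odd case, and in the even case would require $k\equiv 2\bmod 3$ together with $\tfrac{2n}{t}\mid k-1$, contradicting $3\mid\tfrac{2n}{t}$); distinctness from part~1 follows since each new $H_{t,s}$ contains $a^tb^s\notin\langle a\rangle$ yet omits $b$, so it is neither some $\langle a^{t'}\rangle$ nor some $\langle a^{t'},b\rangle$. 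I expect the main obstacle to be precisely this bookkeeping — keeping the even-$t$ collapse $H_{t,1}=\langle a^t,b\rangle$ cleanly separated from the genuinely new cyclic subgroups, and separating the $b$- and $b^2$-families — together with the small but essential remark that $2n$ being even forces $\tfrac{2n}{t}$ to be even whenever $t$ is odd, which is what makes the odd-$t$ order come out to $\tfrac{2n}{t}$ rather than $2\cdot\tfrac{2n}{t}$.
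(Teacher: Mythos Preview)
Your proof is correct and follows essentially the same route as the paper: starting from the cited classification into the four generator types, you compute $\langle a^t b^s\rangle$ explicitly via the power formulas, identify the even-$t$, $3\nmid\tfrac{2n}{t}$ collapse to $\langle a^t,b\rangle$, and separate all remaining cases by order and by inspection of the $b$-part. Your use of the automorphism $b\leftrightarrow b^2$ to halve the casework and your explicit order computation for even $t$ (the paper's ``order $\tfrac{2n}{t}$'' line there is in fact misstated when $3\nmid\tfrac{2n}{t}$) are tidy improvements, but the underlying argument is the same.
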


\begin{proof}
    The following holds:

    Each subgroup $\langle a^t\rangle$ is cyclic group of order $\frac{2n}{t}$, thus it is isomorphic to $\mathbb{Z}_{\frac{2n}{t}}$.

    Each subgroup $\langle a^t, b \rangle$ for $t$ even is isomorphic to $\mathbb{Z}_3\times \mathbb{Z}_{\frac{2n}{t}}$ and it is cyclic, iff $(3, \frac{2n}{t}) = 1$. For $t$ odd it is not a cyclic group. In both cases orders are $3\cdot\frac{2n}{t}$.

    So if we consider $t_1, t_2 \in \{1, 2, \ldots, 2n\}$ such that $t_1, t_2 | 2n$ and $t_1\neq t_2$, then it is immediately to see that $\langle a^{t_1}\rangle \neq \langle a^{t_2}\rangle$ (orders of subgroups differ). 

    Similarly, $\langle a^{t_1}, b\rangle \neq \langle a^{t_2}, b\rangle$. Moreover, since $\langle a^{t_1}, b\rangle$ contains powers of $b$, we have that $\langle a^{t_1}, b\rangle \neq \langle a^{t}\rangle$, for any $t | 2n$. 

    
    If $t$ is odd, $t|2n$ and $s = 1$ or $s = 2$, then $\langle a^t b^s\rangle = \{ a^{tk} b^{s(k\mod 2)}: k = 1, 2, \ldots, 2n/t \}$. Is is a cyclic group of order $\frac{2n}{t}$, which is certainly not in the form of $\langle a^{t'}, b\rangle$ for any $t'\in \mathbb{N}$, neither it is equal to $\langle a^{t'}\rangle$ since it contains elemets with powers of $b$. Moreover, we have that $\langle a^t b\rangle\neq \langle a^t b^2\rangle$ (powers of $b$s in existing elements differ, the former contains just $b$, the latter just $b^2$).


    If $t$ is even, $t|2n$ and $s = 1$ or $s=2$ then $\langle a^t b^s\rangle = \{a^{tk}b^{sk}: k = 1, \ldots, 2n/t \}$. It is again a cyclic group of order $\frac{2n}{t}$. In case that $d:= s(2n/t)\not\equiv 0 \mod 3$, then $\langle a^t b^s\rangle$ contains element  $a^{s(2n)}b^d= b^{d\mod 3}$, $d\mod 3\neq 0$. Combining it with existing powers lead to the fact that  $\langle a^t b^s\rangle =  \langle a^t, b\rangle$, which was listed in the first part. If $3|\frac{2n}{t}$, then it is obvious that $\langle a^t b\rangle \neq \langle a^t b^2\rangle$ and certainly both are not equal to $\langle a^t\rangle$.
\end{proof}

\begin{theorem}\label{thm:allnormal}
    Let $n \in \mathbb{N}$ 
    Then the set of all normal subgroups of $U_{6n}$ consists of the following mutually distinct elements, $t\in \{1, 2, \ldots, 2n\}$:
    \begin{itemize}     
        \item $\langle a^t\rangle$, for all $t|2n$, $t$ even;
        \item $\langle a^t, b\rangle$, for all $t|2n$.
    \end{itemize}
\end{theorem}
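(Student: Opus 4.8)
\section*{Proof proposal for Theorem~\ref{thm:allnormal}}

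The plan is to leverage Theorem~\ref{thm:allsubgroups}, which already enumerates every subgroup of $U_{6n}$, and to test each type for normality. Since a subgroup $H$ of a finite group is normal exactly when $gHg^{-1}\subseteq H$ for every $g$ in a generating set (the set of such $g$ is a subgroup, and $\subseteq$ forces equality by finiteness), it suffices to check invariance under conjugation by $a$ and by $b$ only.

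First I would record the conjugation identities that carry all the needed information. From the relation $bab=a$, i.e.\ $ba=ab^{2}$, one gets $bab^{-1}=ab$, and then by the power formulas of Section~1, $b\,a^{m}b^{-1}=(ab)^{m}$ equals $a^{m}$ when $m$ is even and equals $a^{m}b$ when $m$ is odd. Dually, from $b^{2}a=ab$ one gets $aba^{-1}=b^{2}$, hence $a\,b^{u}a^{-1}=b^{2u}$ and $a\,(a^{m}b^{u})\,a^{-1}=a^{m}b^{2u}$.

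Then I would run through the list in Theorem~\ref{thm:allsubgroups}. For $\langle a^{t},b\rangle$ with $t\mid 2n$: a general element is $a^{tk}b^{u}$; conjugation by $a$ sends it to $a^{tk}b^{2u}\in\langle a^{t},b\rangle$, and conjugation by $b$ sends it to $(ab)^{tk}b^{u}$, which is $a^{tk}b^{u}$ or $a^{tk}b^{u+1}$, again in $\langle a^{t},b\rangle$; hence $\langle a^{t},b\rangle$ is normal. For $\langle a^{t}\rangle$: conjugation by $a$ fixes it, while $b\,a^{t}b^{-1}$ equals $a^{t}$ if $t$ is even (so $\langle a^{t}\rangle$ is normal) and equals $a^{t}b\notin\langle a^{t}\rangle$ if $t$ is odd (so $\langle a^{t}\rangle$ is not normal; by Theorem~\ref{thm:allsubgroups} these odd-$t$ subgroups are genuinely distinct from all the listed ones). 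For $\langle a^{t}b\rangle$ and $\langle a^{t}b^{2}\rangle$ appearing in part~(2) of Theorem~\ref{thm:allsubgroups}: conjugation by $a$ sends the generator $a^{t}b$ to $a^{t}b^{2}$ and, symmetrically, $a^{t}b^{2}$ to $a^{t}b$; since these two subgroups are distinct (Theorem~\ref{thm:allsubgroups}) and have the same finite order $\tfrac{2n}{t}$, neither contains the other, so neither is normal.

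Putting the cases together yields exactly the asserted list, with mutual distinctness inherited from Theorem~\ref{thm:allsubgroups}. I expect no serious obstacle: everything reduces to the two conjugation identities above. The one point requiring a little care is the $\langle a^{t}b^{s}\rangle$ case, where one must invoke the distinctness together with the equal cardinality of $\langle a^{t}b\rangle$ and $\langle a^{t}b^{2}\rangle$ to turn ``conjugation by $a$ swaps the two generators'' into a genuine failure of normality, rather than merely quoting a conjugation formula.
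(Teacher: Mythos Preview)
Your proof is correct and follows the same case-by-case strategy as the paper: run through the subgroup types listed in Theorem~\ref{thm:allsubgroups} and test each for normality. Your execution is leaner in two respects. First, you reduce to conjugation by the two generators $a$ and $b$ via the normalizer-is-a-subgroup observation, whereas the paper conjugates by a general element $g=a^{r}b^{s}$ and then splits into four parity subcases for $r$ and $tk$ in each of the four subgroup types. Second, for the cyclic subgroups $\langle a^{t}b\rangle$ and $\langle a^{t}b^{2}\rangle$ you avoid element-chasing entirely by combining the distinctness and equal-order facts already established in Theorem~\ref{thm:allsubgroups}; the paper instead writes out $g^{-1}Hg$ explicitly and inspects the resulting $b$-exponents. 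Both routes reach the same conclusion; yours is shorter and more conceptual, while the paper's has the advantage of displaying the conjugation action on every element in closed form.
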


\begin{proof}
    Distinction of elements is obvious. According to previous theorem we need to check 4 cases. In general we need to show, than $g^{-1}Hg = H$, for any candidate $H$ for a normal group and any element of $G = U_{6n} = \{a^r b^s: r = 1, \ldots, 2n, s = 0, 1, 2\}$. To shorten display we will omit $\mod 2n$ in exponents of $a$ and $\mod 3$ in exponents $b$. If $g = a^rb^s$, then $g^{-1} = b^{3-s}a^{2n-r}$.

    \noindent Case $H = \langle a^t\rangle$, $t|2n$:

    $H = \{a^{tk\mod 2n}: k = 1, 2, \ldots, 2n/t\}\cong \mathbb{Z}_{2n/t}$

    $g^{-1}Hg = b^{3-s}a^{2n-r}a^{tk} a^r b^s = b^{3-s}a^{tk}b^s = \left\{\begin{matrix}a^{tk},& tk \mbox{ even}\\
        a^{tk}b^{2(3-s) + s},& tk\mbox{ odd}\end{matrix}\right. = \\  = \left\{\begin{matrix}a^{tk},& tk \mbox{ even}\\
            a^{tk}b^{-s},& tk\mbox{ odd}\end{matrix}\right.$. We see that for odd elements $t$ we get that $H\neq g^{-1}Hg$, i.e., $H$ is not a normal group.
    
    \noindent Case $H = \langle a^t b\rangle$, $t|2n$:

    $H = \{a^{tk} b^{k\mod 2}: k = 1, 2, \ldots, 2n/t\}\cong \mathbb{Z}_{2n/t}$

    $g^{-1}Hg = b^{3-s}a^{2n-r}a^{tk} b^{k\mod 2} a^r b^s = \\  = 
    \left\{\begin{matrix}
        a^{2n - r + tk}b^{(3-s) + (k\mod 2)} a^r b^s,& -r + tk \mbox{ even}\\
        a^{2n - r + tk}b^{2(3-s) + (k\mod 2)} a^r b^s,& -r + tk \mbox{ odd}\\
    \end{matrix}\right. = \\ 
    = \left\{\begin{matrix}
        a^{tk} b^{k\mod 2},& r\mbox{ even}, tk \mbox{ even}\\
        a^{tk} b^{2(k\mod 2)-s},& r\mbox{ odd}, tk \mbox{ odd}\\
        a^{tk} b^{(k\mod 2) -s},& r\mbox{ even}, tk \mbox{ odd}\\
        a^{tk} b^{2(k\mod 2)},& r\mbox{ odd}, tk \mbox{ even}\\
    \end{matrix}\right.$. We see that there is no normal group of this form.

    \noindent Case $H = \langle a^t b^2\rangle$, $t|2n$:

    $H = \{a^{tk} b^{2(k\mod 2)}: k = 1, 2, \ldots, 2n/t\}\cong \mathbb{Z}_{2n/t}$

    $g^{-1}Hg = b^{3-s}a^{2n-r}a^{tk} b^{2(k\mod 2)} a^r b^s = \\  = 
    \left\{\begin{matrix}
        a^{2n - r + tk}b^{(3-s) + 2(k\mod 2)} a^r b^s,& -r + tk \mbox{ even}\\
        a^{2n - r + tk}b^{2(3-s) + 2(k\mod 2)} a^r b^s,& -r + tk \mbox{ odd}\\
    \end{matrix}\right. = \\ 
    = \left\{\begin{matrix}
        a^{tk} b^{2(k\mod 2)},& r\mbox{ even}, tk \mbox{ even}\\
        a^{tk} b^{(k\mod 2)-s},& r\mbox{ odd}, tk \mbox{ odd}\\
        a^{tk} b^{2(k\mod 2) -s},& r\mbox{ even}, tk \mbox{ odd}\\
        a^{tk} b^{(k\mod 2)},& r\mbox{ odd}, tk \mbox{ even}\\
    \end{matrix}\right.$. We see that there is no normal group of this form.
    
    \noindent Case $H = \langle a^t,  b\rangle$, $t|2n$:
    
    $H = \{a^{tk} b^u: k = 1, 2, \ldots, 2n/t, u = 0, 1, 2\}$

    $g^{-1}Hg = b^{3-s}a^{2n-r}a^{tk} b^u a^r b^s = \\
     = \left\{\begin{matrix}
        a^{2n - r + tk}b^{(3-s) + u} a^r b^s,& -r + tk \mbox{ even}\\
        a^{2n - r + tk}b^{2(3-s) + u} a^r b^s,& -r + tk \mbox{ odd}\\
    \end{matrix}\right. = \\ 
    = \left\{\begin{matrix}
        a^{tk} b^{u},& r\mbox{ even}, tk \mbox{ even}\\
        a^{tk} b^{2u - s},& r\mbox{ odd}, tk \mbox{ odd}\\
        a^{tk} b^{u - s},& r\mbox{ even}, tk \mbox{ odd}\\
        a^{tk} b^{2u},& r\mbox{ odd}, tk \mbox{ even}\\
    \end{matrix}\right.
    $.

    Since each mapping depended on the variable $u$ (computed modulo 3), functions $u$, $2u-s$, $u - s$ and $2u$ are bijections from $\{0,1,2\}$ onto $\{0,1,2\}$ for any fixed $s\in \{0,1,2\}$, we get that each $\langle a^t, b\rangle$, $t|2n$ is a normal group.
\end{proof}

The previous theorems allows us to characterize the sets of all (normal) subgroups of $U_{6n}$.
Let $n = 2^{p_2}3^{p_3}p_1^{k_1}\ldots p_l^{k_l}$ be a (extended) canonical decomposition of $n$ ($p_2$ or $p_3$ can equal to $0$), where $p_i\neq 2, 3$ are distinct odd primes, $i = 1, 2, \ldots, l$. Then all subgroups of $U_{6n}$ are 
\begin{itemize}
    \item $\langle a^k\rangle$, $\langle a^k, b\rangle$, for $k = 2^{s_2} 3^{s_3} p_1^{t_1} \ldots p_l^{t_l}$, where $0\leq s_2\leq p_2 + 1$, $0\leq s_3\leq p_3$ and for $i = 1, 2, \ldots, l$, $0\leq t_i\leq k_i$.

    \item $\langle a^k b\rangle$, $\langle a^k b^2\rangle$, for $k = 2^{s_2} 3^{s_3} p_1^{t_1} \ldots p_l^{t_l}$, where
    \begin{itemize}
        \item for $s_2 = 0$: $0\leq s_3\leq p_3$ and for $i = 1, 2, \ldots, l$, $0\leq t_i\leq k_i$;  
        \item for $1\leq s_2\leq  p_2 + 1$: $0\leq s_3\leq p_3 - 1$  and for $i = 1, 2, \ldots, l$, $0\leq t_i\leq k_i$.
    \end{itemize}
\end{itemize}

All normal subgroups of $U_{6n}$ are 
\begin{itemize}
    \item $\langle a^k\rangle$, where $k = 2^{s_2} 3^{s_3} p_1^{t_1} \ldots p_l^{t_l}$, where $1\leq s_2\leq p_2 + 1$, $0\leq s_3\leq p_3$ and for $i = 1, 2, \ldots, l$, $0\leq t_i\leq k_i$.
    
    \item $\langle a^k, b\rangle$where $k = 2^{s_2} 3^{s_3} p_1^{t_1} \ldots p_l^{t_l}$, where $0\leq s_2\leq p_2 + 1$, $0\leq s_3\leq p_3$ and for $i = 1, 2, \ldots l$, $0\leq t_i\leq k_i$.
\end{itemize}

We denote the set of all subgroups of a group $G$ by $SG(G)$. We are able to define strict partial order $G_1\prec G_2$, iff $G_1$ is a subgroup of $G_2$ and $G_1\neq G_2$. By $SG^*(G)$ we denote the set of all subgroups except the trivial one-element subgroup. 

To compute the number of chains of subgroups  with length $n$ ending in $G$ (the whole group) and not containing the trivial one-element subgroup $\{e\}$, the so called \emph{proper $n$-chains in $G$}, we use dynamic programming. We denote $L(i, G_j)$ the number of chains of length $i$ starting in $G_j$ and ending in the whole group $G$, for any subgroup $G_j$ of $G$. Then we have recurrence relation \[L(i+1, G_j) = \sum_{G_k\in SG^*(G)} \{L(i, G_k): G_j \prec G_k\},\] with boundary conditions \[L(1, G) = 1 
\mbox{ and }L(1, G_k) = 0\mbox{ for all }G_k\in SG^*(G)\setminus\{G\}.\]

Similarly for normal subgroups we denote the set of all normal subgroups of $G$ by $SG_n(G)$, and $SG_n^*(G) = SG_n(G) \setminus \{e\}$. The strict relation $G_1 \prec_n G_2$ is defined as $G_1$ is a proper normal subgroup of $G_2$ and for the number of chains of normal subgroups of length $n$ starting in $G_j$ ending in $G$ and not containing trivial one-element subgroup we obtain recurrence \[L_n(i+1, G_j) = \sum_{G_k\in SG_n^*(G)} \{L_n(i, G_k): G_j\prec_n G_k\},\] with boundary conditions \[L_n(1, G) = 1\mbox{ and }L_n(1, G_i) = 0\mbox{ for any }G_i\in SG_n^*\setminus \{G\}.\]

In both recurrences sums of empty sets are equal to $0$.

We denote the counts of (normal) proper $n$-chains in $G$ by $L^G_n$ (${}^nL^G_n$)  for $n\geq 1$, respectively. Then \[
	L^G_n = \sum_{G_k\in SG^*(G)} L(n, G_k)\mbox{ and } {}^nL^G_n = \sum_{G_k\in SG_n^*(G)} L_n(n, G_k).\]

Then by \cite{Volf2004} we have that the number of all (normal) fuzzy subgroups of $G$ (with respect to relation $\sim$) is equal to $N_F(G) = 2\sum_{n\geq 1} L^G_n$ ($N_{NF}(G) = 2\sum_{n\geq 1} {}^nL^G_n$). (This is justified by the fact that each chain of subgroups ending in $G$ and not containing trivial one-element subgroup gives rise to precisely two chains of subgroups ending in $G$ - itself and the one with appended one-element subgroup.)

To summarize previous reasoning we provide Algorithm~\ref{alg:numchains} for computing the number of $n$-chains and number of fuzzy subgroups of $G$. The set $SG^*(G)$ is obtained from Theorem~\ref{thm:allsubgroups}, the precomputation phase is governed by Lemma~\ref{lem:subg}.

Similar algorithm can be obtained for counting normal fuzzy subgroups and normal $n$-chains (chains of length $n$ of normal subgroups ending in G not containing trivial one-element subgroup.) The set $nSG^*$ is obtained from Theorem~\ref{thm:allnormal} and $\prec_n$ relation is governed by Lemma~\ref{lem:subg}.

\section*{Algorithms}

\begin{algorithm}[H]\label{alg:numchains}
    \caption{Algorithm for computing $L^G_n$ and $N_F(G)$}
Input: $SG^*(G)$

Output: $L^G_n$ for $n\geq 1$ and $N_F(G)$

    1. Precomputation of relation $\prec$

    2. Initial conditions: 
    
    \hspace*{1cm} $L^G_1 = L(1,G) = 1$ and $L(1, H) = 0$, for $H\in SG^*(G)\setminus \{G\}$.

    3. $s = 1$, $k = 1$

    4. \textbf{while} $s > 0$

    \hspace*{1cm} $tmps = 0$

    \hspace*{1cm} \textbf{for each} $H \in SG^*(G)$ do

    \hspace*{2cm} $L(k+1, H) = \sum_{H\prec S} L(k, S)$
            
    \hspace*{2cm} $tmps = tmps + L(k+1, H)$

    \hspace*{1cm} $L^G_{k+1} = tmps$ 
    
    \hspace*{1cm} $s = tmps$ 
        
    \hspace*{1cm} $k = k + 1$
        
    5. Return 1: 
    
    \hspace*{1cm} $L^G_n$ 
    , for 
    $1\leq n\leq k-1$
    
    \hspace*{1cm} $L^G_n = 0$, for $n\geq k$.

    6. Return 2: 
    
    \hspace*{1cm} $N_F(G) = 2\sum_{n\geq 1} L^G_n = 2\sum_{n = 1}^{k-1} L^G_n$.
\end{algorithm}

\begin{algorithm}[H]\label{alg:normnumchains}
    \caption{Algorithm for computing ${}^nL^G_n$ and $N_{NF}(G)$}
Input: $SG^*_n(G)$

Output: ${}^nL^G_n$ for $n\geq 1$ and $N_{NF}(G)$

    1. Precomputation of relation $\prec_n$

    2. Initial conditions: 
    
    \hspace*{1cm} ${}^nL^G_1 = Ln(1,G) = 1$ and $Ln(1, H) = 0$, for $H\in SG^*_n(G)\setminus \{G\}$.

    3. $s = 1$, $k = 1$

    4. \textbf{while} $s > 0$

    \hspace*{1cm} $tmps = 0$

    \hspace*{1cm} \textbf{for each} $H \in SG^*_n(G)$ do

    \hspace*{2cm} $Ln(k+1, H) = \sum \{Ln(k, S): S\in SG^*_n(G), H\prec_n S\}$
            
    \hspace*{2cm} $tmps = tmps + Ln(k+1, H)$

    \hspace*{1cm} ${}^nL^G_{k+1} = tmps$ 
    
    \hspace*{1cm} $s = tmps$ 
        
    \hspace*{1cm} $k = k + 1$
        
    5. Return 1: 
    
    \hspace*{1cm} ${}^nL^G_n$ 
    , for 
    $1\leq n\leq k-1$
    
    \hspace*{1cm} ${}^nL^G_n = 0$, for $n\geq k$.

    6. Return 2: 
    
    \hspace*{1cm} $N_{NF}(G) = 2\sum_{n\geq 1} {}^nL^G_n = 2\sum_{n = 1}^{k-1} {}^nL^G_n$.
\end{algorithm}
\end{document}